\newtheorem{theorem}{Theorem}[section]
\newtheorem{lemma}[theorem]{Lemma}
\theoremstyle{definition}
\newtheorem{corollary}[theorem]{Corollary}
\theoremstyle{remark}
\newcommand{\R}{{\mathbb{R}}}
\begin{document}
\title{gordian unlinks}


\author{Jos\'{e} Ayala}
\address{Universidad de Tarapac\'a, Iquique, Chile}
\email{jayalhoff@gmail.com}
\author{Joel Hass}
\address{Department of Mathematics, University of California, Davis California 95616}
\email{jhass@ucdavis.edu}
\thanks{Research partially supported by Fondecyt Grant \#11220579}
\subjclass[2020]{57K10, 57K35, 53C42, 49Q10}
\keywords{unlinks, physical knots, geometric knot theory, gordian knots, ideal knots, ropelength.}


\begin{abstract} \baselineskip=1.2\normalbaselineskip 
This paper gives the first examples of gordian unlinks. The components of these unlinks cannot be separated while maintaining constant length and thickness. We construct infinite families of 2-component gordian unlinks and also construct $n$-component gordian unlinks for each $n \geq 2$.
\end{abstract}

\maketitle \baselineskip=1.3\normalbaselineskip 

\section{Introduction}

A knot or link is {\em thick} if it has an embedded neighborhood of some fixed radius. After scaling, the radius can be assumed to be one. A {\em gordian unlink} is a thick unlink that cannot be split by an isotopy that preserves length and thickness. This means that no such isotopy moves the link so that any two components are separated by a plane. In this paper, we give the first examples of gordian unlinks. Two examples are shown in Figure \ref{fig:example1}.

\begin{figure}[htbp]
   \centering
   \includegraphics[width=.4\linewidth]{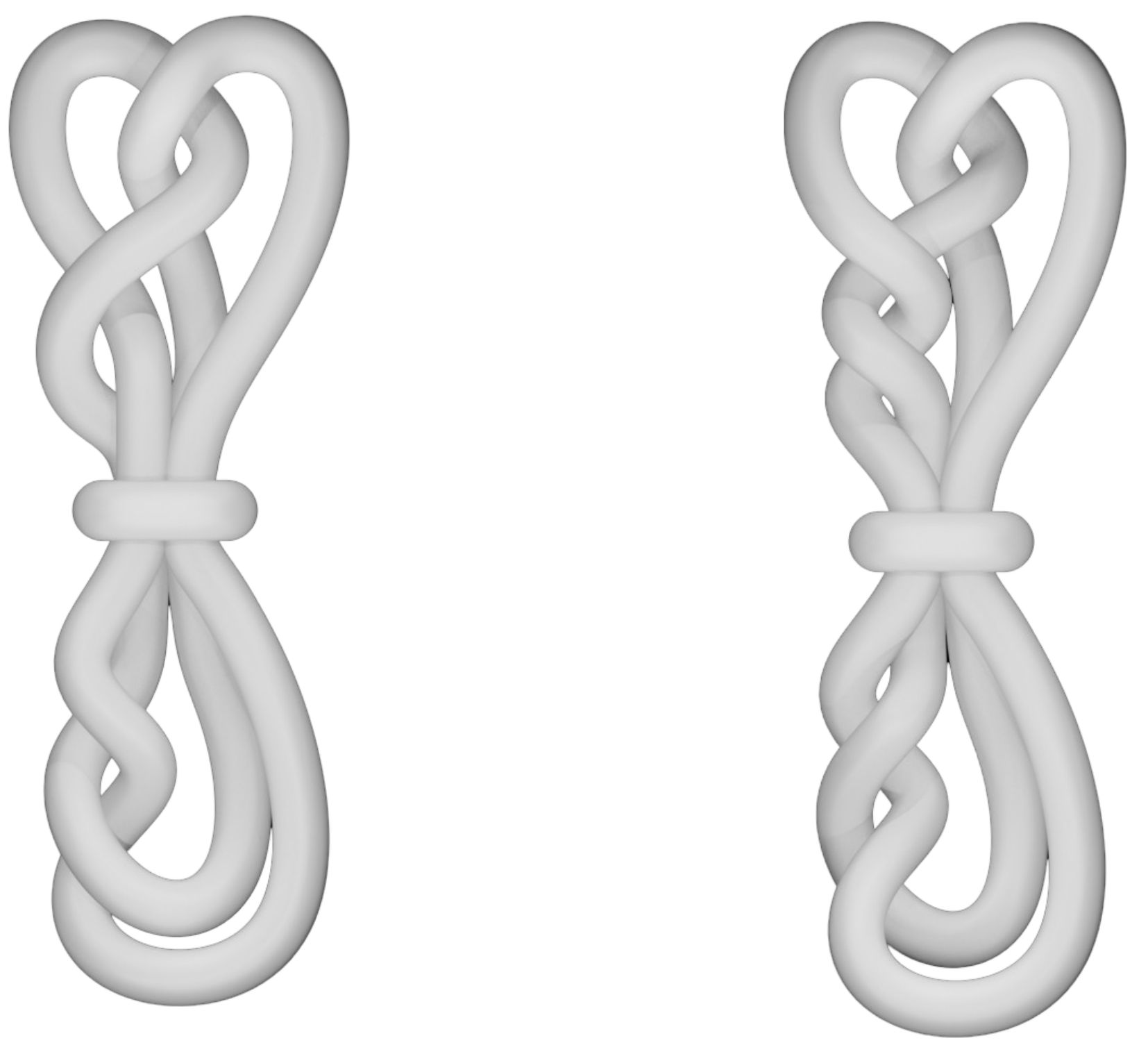}
    \caption{Two of the simplest examples in a family $L(m,n)$ of gordian unlinks. The unlink on the left is  $L(1,1)$, and on the right is $L(1, -1)$.}
    \label{fig:example1}
   \end{figure}

Simple closed curves in $\R^3$ that model physical curves, such as ropes or proteins, have a positive thickness. This is in contrast to classical knot theory, which studies 1-dimensional curves without thickness. Knot theory is playing an increasing role in the study of biological and physical processes \cite{{flapanwong, seguraetal, tubiana}}, and the properties of thick knots are often more relevant in these applications than the properties of classical knots.

We use the standard definition of thickness, in which a curve is thick if its reach is greater than or equal to one \cite{gm, CKS}. This means that any point with distance less than one from the link has a unique closest point on the link. Two thick links are equivalent if there is an isotopy between them that maintains this thickness condition, a {\em thick isotopy}. 
Isotopy classes of thick links differ from those of classical links, but this difference is surprisingly difficult to prove. The first examples were given by Coward and Hass \cite{cowardhass}, with a construction of 2-component thick links that are topologically split but cannot be split by a thick isotopy. These examples rely on the knottedness of one component. Kusner and Kusner later constructed a link with seven unknotted components, but with some pairs of components having a non-zero linking number. They showed that these links have two thick configurations that are not isotopic through a thick isotopy \cite{kusnerkusner}. Their method requires having non-trivial linking numbers between distinct components, and again does not extend to an unlink. Ayala considered 2-component unlinks that are thick and also satisfy an additional restriction that bounds the curvature of the surfaces forming the boundary of the tubular neighborhood of the link \cite{thingor1}. He showed that these unlinks are not isotopic to the standard 2-component unlink through thick isotopies that also maintain this additional condition, but his methods do not apply without these additional restrictions. 

In this article, we construct the first examples of 2-component gordian unlinks. These examples have one component which is a convex curve on the $xy$ plane of length $8+4\pi$. The second component, also unknotted, weaves back and forth through the first.
Figure~\ref{fig:example1} shows two examples of such links. The method also works to construct 
$n$-component gordian unlinks for $n>2$.
 
The existence of these links has consequences for the configuration space of links. Hatcher's proof of the Smale conjecture implies that the space of smooth unknotted loops in $\mathbb{R}^3$ is homotopy equivalent to the space of great circles in $S^2$ \cite{hatcher1}. Brendle and Hatcher extended this to $n$-component unlinks, showing that the space of smooth links in $\mathbb{R}^3$ that are isotopic to the $k$-component unlink is homotopy equivalent to the space of configurations of $k$ unlinked round circles \cite{hatcher2}. The existence of gordian unlinks shows that this result is false for thick links. The space of $k$-component thick unlinks is not homotopy equivalent to the space of configurations of $k$ unlinked round circles, for any $k \ge 2$. 

We note that R. Kusner and W. Kusner have announced a construction of different gordian unlinks \cite{kusnerkusner2}.

A {\em gordian unknot}, the case where $k=1$, is an unknotted loop with fixed length and thickness that cannot be deformed into a round circle by an isotopy preserving length and thickness. The existence of a gordian unknot remains open.

\noindent
{\bf Acknowledgments.}
We thank Rafael Gonzalez for rendering the 3D models of the thick knots shown in the figures.

\section{Preliminaries}

We review some properties of coned surfaces. 
Let $\gamma:[0,1]\to \R^3$ be a closed curve and $P \in \R^3$ be a point. 
The surface $C(\gamma,P)$ defined by $P + t\gamma(s)$ with $0 \le t < \infty, ~ 0 \le s \le 1$ is called the cone {\em over $\gamma$ with the cone point $P$.} We always take the cone point to be located at the center of mass, or centroid, of $\gamma$, and denote the cone by $C(\gamma)$.
The unit sphere around $P$ intersects $C(\gamma)$ in a curve $\bar \gamma \subset S^2$. 
The {\em cone  angle $\theta$}  of $C(\gamma)$ is defined to be the length of $\bar \gamma$ and $D(\gamma)$ is the disk that $\gamma$ bounds in $C(\gamma)$.
 
The curvature of the curve $\gamma$ and the surface $C(\gamma)$ are generally defined using second derivatives, which may not exist for a $C^{1,1}$ curve.  
When $\gamma$ is piecewise $C^2$ then $C(\gamma)$ inherits a flat metric away from the cone point.
When the cone angle satisfies $\theta \ge 2\pi$ then $C(\gamma)$ is a 2-dimensional CAT(0) space.
In these spaces, there is a unique geodesic connecting any two points, and the notions of half-space and convex hull are well-defined. 
Since thick curves are not $C^2$ in general, we take care to extend our results beyond the setting of smooth curves. 
For many results, it suffices to assume that $\gamma$ is rectifiable, so that lengths are defined.
Cantarella, Kusner, and Sullivan \cite{CKS} established $C^{1,1}$ regularity for thick links and studied configurations of thick links with minimal length, called ideal links \cite{idealbook}.

We now state a number of preliminary lemmas. 

\begin{lemma} \label{isoperimetric}
Given a rectifiable curve $\gamma$, the planar isoperimetric inequality holds for $D(\gamma)$, 
$$
 \mbox{area}(D(\gamma)) \le (\ell(\gamma))^2/4\pi .
$$
\end{lemma}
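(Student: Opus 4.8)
The plan is to work with the intrinsic geometry of the cone. Away from the cone point $P$, the surface $C(\gamma)$ is flat, so $D(\gamma)$ is an intrinsic disk whose metric is flat except for a single conical singularity at $P$; the total curvature concentrated at $P$ is the defect $2\pi - \theta$, where $\theta$ is the cone angle. The target inequality is exactly the Euclidean isoperimetric inequality, so I would deduce it from two facts: first, that $\theta \ge 2\pi$, so the singularity carries \emph{non-positive} curvature; and second, the classical statement that a disk of non-positive curvature satisfies $\ell(\partial D)^2 \ge 4\pi\,\mathrm{area}(D)$. The first fact is where the hypothesis that $P$ is the centroid enters, and it is the crux of the proof.

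To see that $\theta \ge 2\pi$, use that $P$ being the centroid means $\int_\gamma (x - P)\,ds = 0$. Hence for every unit vector $e$ the function $s \mapsto \langle \gamma(s) - P,\, e\rangle$ integrates to zero and so cannot be everywhere positive; consequently the spherical curve $\bar\gamma$, the radial projection of $\gamma$ from $P$, meets the closed hemisphere $\{\langle \cdot,\, e\rangle \le 0\}$. As $e$ was arbitrary, $\bar\gamma$ lies in no open hemisphere of $S^2$, and the classical theorem that a closed spherical curve shorter than $2\pi$ is contained in an open hemisphere forces $\theta = \ell(\bar\gamma) \ge 2\pi$. If $\gamma$ happens to lie in a plane, then $C(\gamma)$ is that plane and $D(\gamma)$ is a genuine planar region, so the ordinary planar isoperimetric inequality applies directly; if $\gamma$ meets $P$, approximate by curves that avoid their centroids and take a limit.

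Given $\theta \ge 2\pi$, I would finish with the sweepout proof of the isoperimetric inequality for non-positively curved disks. Let $\rho(p) = \mathrm{dist}(p, \gamma)$ be the distance to the boundary inside $D(\gamma)$, put $\Omega_t = \{\rho > t\}$ and $\ell(t) = \ell(\{\rho = t\})$. By the coarea formula, $\mathrm{area}(D(\gamma)) = \int_0^\infty \ell(t)\,dt$, while the variation formula for the lengths of the inner parallels together with Gauss--Bonnet applied to $\Omega_t$ gives $\ell'(t) = \iint_{\Omega_t} K\, dA - 2\pi \le -2\pi$, since the only curvature of $C(\gamma)$ is the non-positive atom at $P$. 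Hence $\ell(t) \le \ell(\gamma) - 2\pi t$, so $\ell$ vanishes once $t \ge \ell(\gamma)/(2\pi)$, and
\[
\mathrm{area}(D(\gamma)) \;=\; \int_0^{\ell(\gamma)/(2\pi)}\!\!\ell(t)\,dt \;\le\; \int_0^{\ell(\gamma)/(2\pi)}\!\!\big(\ell(\gamma) - 2\pi t\big)\,dt \;=\; \frac{\ell(\gamma)^2}{4\pi}.
\]
Alternatively, since $\theta \ge 2\pi$ makes $C(\gamma)$ a two-dimensional $\mathrm{CAT}(0)$ space, one may simply quote the isoperimetric inequality with the Euclidean constant, which holds in such spaces.

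The main obstacle is regularity. For a merely rectifiable $\gamma$ the metric of $D(\gamma)$ need not be smooth: the distance function $\rho$ and its level curves need care, and the inner parallels $\Omega_t$ may be disconnected or multiply connected, so the Gauss--Bonnet accounting has to be carried out componentwise with attention to Euler characteristics. I would circumvent this by approximating $\gamma$ by polygonal or smooth curves on which the argument runs cleanly, then pass to the limit using continuity of length and upper semicontinuity of the enclosed area. One should also confirm that $D(\gamma)$ is genuinely a topological disk with boundary $\gamma$ --- automatic when $\gamma$ is simple, and otherwise arranged by restricting to the appropriate innermost subdisk.
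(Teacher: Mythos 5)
Your proposal is correct, and its skeleton matches the paper's: reduce to the fact that the cone over a centroid-coned curve has cone angle $\theta \ge 2\pi$, invoke the Euclidean isoperimetric inequality for disks in flat cones with non-positive curvature concentration, and handle merely rectifiable $\gamma$ by smooth approximation with convergence of lengths and areas. Where you differ is in how the two ingredients are established. For $\theta \ge 2\pi$ you use the centroid condition together with the classical fact that a spherical curve of length less than $2\pi$ lies in an open hemisphere; the paper instead proves this (in its Lemma~\ref{planar}) via Crofton's formula, counting intersections of $\bar\gamma$ with great circles --- both arguments hinge on the same observation that the centroid cannot be separated from $\gamma$ by a plane, and yours is arguably more elementary. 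For the isoperimetric step the paper simply cites Izmestiev's inequality for Euclidean cone-surfaces, whereas you supply a self-contained sweepout proof via inner parallels, the coarea formula, and Gauss--Bonnet (or alternatively the CAT(0) isoperimetric inequality, which is essentially the content of the cited reference). Your version buys a proof that does not outsource the key inequality, at the cost of the regularity bookkeeping you correctly flag (non-smooth level sets of $\rho$, possibly disconnected inner parallel domains), which you resolve the same way the paper resolves its own regularity issues: by approximation. One small point: the ``disk'' $D(\gamma)$ here is by definition the coned-in disk $\{P + t\gamma(s)\}$, a mapped disk whose intrinsic metric is the flat cone metric, so your closing worry about whether $D(\gamma)$ is genuinely an embedded topological disk is not needed --- the inequality is intrinsic to the cone.
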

\begin{proof}
The rectifiable curve $\gamma$ is a pointwise limit of smooth curves $\gamma_i$ with length limiting to $\ell(\gamma)$. The curve $\gamma_i$ bounds a disk $D(\gamma_i)$ in the cone $C(\gamma_i)$ coned to the center of mass $P_i$ of $\gamma_i$. The cones $C(\gamma_i)$ have a singular flat metric with a single cone point having angle greater than or equal to $2\pi$. The disks in such cones satisfy the Euclidean isoperimetric inequality, 
$  \mbox{area}(D(\gamma_i)) \le (\ell(\gamma_i))^2/4\pi$ \cite{izmestiev}.  
As $i \to \infty$ we have that $P_i \to P$, $C(\gamma_i) \to C(\gamma)$, 
$\ell(\gamma)_i) \to\ell(\gamma) 
$ and $\mbox{area}(D(\gamma_i)) \to \mbox{area}(D(\gamma))$.  The Euclidean isoperimetric inequality holds for each smooth curve $\gamma_i$, therefore, it also holds for $D(\gamma)$ and $\gamma$.
\end{proof}

\begin{lemma} \label{planar}
Let $\gamma$ be a rectifiable curve in $\R^3$ coned to its center of mass.
The cone angle $\theta$ of $C(\gamma)$ satisfies $\theta \ge 2\pi$. If $\theta = 2\pi$ then $\gamma$ is a convex curve that lies in a flat plane in $\R^3$.
\end{lemma}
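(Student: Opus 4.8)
The plan is to identify the cone angle $\theta$ with the length of the radial projection $\bar\gamma$ of $\gamma$ onto the unit sphere about the centroid $P$, and then to combine the classical hemisphere lemma for spherical curves with the defining property of $P$. Normalize so that $P$ is the origin; then in an arc-length parametrization $\int_0^{\ell(\gamma)}\gamma(s)\,ds=0$, and $\bar\gamma(s)=\gamma(s)/|\gamma(s)|$ is a closed rectifiable curve on $S^2$ with $\ell(\bar\gamma)=\theta$. First I would note that $\bar\gamma$ meets every great circle: if $\bar\gamma$ missed $v^{\perp}$ for some unit vector $v$, then $\langle\gamma(s),v\rangle$ would have constant sign on $[0,\ell(\gamma)]$, forcing $\langle\int_0^{\ell(\gamma)}\gamma(s)\,ds,\,v\rangle\neq 0$ and contradicting $P=0$. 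The hemisphere lemma — a closed spherical curve meeting every great circle has length at least $2\pi$, which one can read off from the spherical Crofton formula $\ell(\bar\gamma)=\tfrac14\int_{S^2}\#(\bar\gamma\cap v^{\perp})\,dv$ together with $\#(\bar\gamma\cap v^{\perp})\geq 2$ for almost every $v$ — then gives $\theta\geq 2\pi$.

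For the equality case I would invoke the sharp form of the hemisphere lemma: a closed spherical curve of length exactly $2\pi$ that meets every great circle lies in some \emph{closed} hemisphere $H_v=\{x:\langle x,v\rangle\geq 0\}$. Granting this, $\langle\gamma(s),v\rangle=|\gamma(s)|\,\langle\bar\gamma(s),v\rangle\geq 0$ for every $s$, while $\int_0^{\ell(\gamma)}\langle\gamma(s),v\rangle\,ds=0$; since the integrand is continuous and nonnegative, it vanishes identically. Hence every point of $\gamma$ lies in the plane $\Pi=v^{\perp}$ through $P$, so $\gamma$ is planar and $\bar\gamma$ lies on the great circle $C=\Pi\cap S^2$.

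It remains to upgrade planarity to convexity. Since $\bar\gamma$ is a closed loop on $C$ of total length $\ell(C)=2\pi$, its winding number about $C$ is $0$ or $\pm1$. If it were $0$, the lifted angle function would return to its starting value with total variation $2\pi$, hence would stay within an interval of length at most $\pi$, so $\bar\gamma$ would be confined to a closed semicircle $\{w\in C:\langle w,u\rangle\geq 0\}$ for some unit $u\in\Pi$; the argument of the previous paragraph applied to $u$ would then force $\gamma$ into the line $\Pi\cap u^{\perp}$, which is excluded. Therefore $\bar\gamma$ runs monotonically once around $C$, i.e. the ray from $P$ to $\gamma(s)$ turns monotonically through one full revolution, which is the asserted convexity of $\gamma$ in the plane $\Pi$. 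I expect the sharp hemisphere lemma — strengthening ``length $2\pi$ and meets every great circle'' to ``lies in a closed hemisphere'' — to be the crux, since that is what lets the centroid condition bite; it follows from the standard bisection proof of the hemisphere lemma, whose borderline analysis identifies the extremal curves as great circles and two-arc ``bigons'', each of which lies in a closed hemisphere. A minor technical point is that the spherical-curve facts used must hold for merely rectifiable $\gamma$, which makes it cleaner to argue through the Crofton formula than by smooth approximation: the hypothesis ``meets every great circle'' is not preserved under pointwise limits, but here it is delivered directly by the centroid identity.
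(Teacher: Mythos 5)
Your lower bound $\theta\ge 2\pi$ is the paper's own argument: the centroid identity $\int_0^{\ell(\gamma)}\gamma(s)\,ds=0$ forces $\bar\gamma$ to meet every great circle, and Crofton plus the a.e.\ parity count gives $\ell(\bar\gamma)\ge 2\pi$ (your derivation of ``meets every great circle'' from the integral identity is a cleaner statement of the paper's ``otherwise $\gamma$ would lie on one side of a plane through its center of mass''). For the planarity half of the equality case you take a genuinely different route. The paper argues contrapositively: a non-planar (or planar non-convex) $\gamma$ is claimed to have $\bar\gamma$ meeting an open set of great circles at least four times, whence Crofton gives a strict inequality. You instead invoke the hemisphere lemma (a closed spherical curve of length $\le 2\pi$ lies in a closed hemisphere $H_v$; the ``meets every great circle'' hypothesis is not actually needed for this) and then let the centroid identity finish: $\langle\gamma(s),v\rangle\ge 0$ with vanishing integral forces $\gamma\subset v^\perp$. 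This is arguably more robust for merely rectifiable curves and avoids the delicate four-intersection count; your exclusion of the degree-zero case is also sound.

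The gap is your last step. Monotonicity of the lifted angle says that the ray from $P$ to $\gamma(s)$ sweeps once monotonically around $\Pi$, i.e.\ that $\gamma$ is a radial graph $r=r(\phi)$ over $P$ (star-shaped about $P$). That is not convexity: convexity is monotone turning of the \emph{tangent} direction, not of the radial direction. For example, $r(\phi)=2+\cos(5\phi)$ defines a smooth, simple, non-convex planar curve whose centroid is the origin (by its five-fold symmetry) and whose radial projection traverses the unit circle once monotonically, so $\theta=2\pi$. Your argument therefore proves exactly ``planar and star-shaped about the centroid,'' and the upgrade to ``convex'' cannot be carried out; indeed this example shows the convexity assertion of the lemma itself is too strong, and the paper's own justification (that a non-convex planar curve has $\bar\gamma$ meeting an open set of great circles at least four times) fails for precisely such non-convex radial graphs. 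The damage is contained: only the planarity conclusion of this lemma is used later (in Lemma~\ref{4pts1}), and that part both you and the paper establish correctly.
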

\begin{proof}
Crofton's formula for a curve on the unit 2-sphere states that the length of $\bar \gamma \subset S^2$ can be calculated by integrating, over points $p$ in the 2-sphere, the number of intersections of $\bar \gamma $ with the great circle perpendicular to $p$ and dividing by four \cite{santalo}. With the cone point $P$ at the center of mass of $\gamma$, each great circle of $S^2$ must intersect $\bar \gamma$, since otherwise $\gamma$ would lie on one side of a plane through its center of mass. 
The number of intersection points with $\bar \gamma $ is even for almost all great circles, so the length of $\bar \gamma $ is at least $8\pi/4 = 2\pi$.

Now suppose that $\gamma$ does not lie in a plane, so that $\bar \gamma$ is not contained in a great circle. Then  the convex hull of $ \gamma$ has positive volume and the center of mass of $\gamma$ lies in its interior. The length of $\bar \gamma$ is then strictly greater than $2\pi$, since $\bar \gamma$ intersects almost every great circle at least twice, and some open set of great circles at least four times. If $\gamma$ is a planar curve that is not convex, then this condition again holds. So, if $\theta = 2\pi$, then $\gamma$ is a convex planar curve.
\end{proof} 

Suppose that $p_1,p_2,p_3,p_4$ are four points in $C(\gamma)$ with $d(p_i, p_j) \ge 2$ for $i \ne j$ and $d(p_i, \gamma) \ge 2$. 
Suppose further that these four points form the vertices of a convex quadrilateral $K$ in $C(\gamma)$ and
that the cone point $P$ lies in the interior of this quadrilateral.  
Then we say that $\gamma$ has the {\em four-point property} with respect to $p_1,p_2,p_3,p_4$.
 
\begin{lemma} \label{4pts1}  
Suppose that $\gamma$ is a rectifiable curve in $\R^3$ that has the four-point property with respect to $p_1,p_2,p_3,p_4$
and $\ell(\gamma) = 8  + 4\pi$.
Then $\theta = 2\pi$, $\gamma$ lies in a plane and $K$ is a parallelogram with four edges of length two. 
\end{lemma}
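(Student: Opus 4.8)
\emph{Proof idea.} The plan is to prove the sharp inequality
\[
 \ell(\gamma)\;\ge\;\operatorname{perimeter}(K)+2\theta\;\ge\;8+4\pi ,
\]
working throughout in the CAT(0) cone $C(\gamma)$ (its cone angle is $\ge 2\pi$ by Lemma~\ref{planar}, and the merely rectifiable case is handled by the smooth approximation already used in Lemma~\ref{isoperimetric}), and then to read off all three conclusions from the case of equality. The first step is a separation argument showing that $K$ and the four metric balls $B(p_i,2)$ all lie inside the disk $D(\gamma)$: since $d(p_i,\gamma)\ge 2$, each open ball $B(p_i,2)$ is disjoint from $\gamma$ and hence lies in a single complementary component of $\gamma$ in $C(\gamma)$, and since $P$ lies in $\operatorname{int}K$ and in the bounded component $D(\gamma)$ while $K$ is convex, that component must be $D(\gamma)$ for every $i$. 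I expect this to be the main obstacle, as one must rule out a vertex drifting into the unbounded side of $\gamma$.

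Granting the containment $K\cup\bigcup_i B(p_i,2)\subseteq D(\gamma)$, let $N$ be the closed $2$-neighbourhood of $K$ in $C(\gamma)$; it is convex. A point at distance $\le 2$ from $K$ has its nearest point of $K$ either at a vertex $p_i$, so that it lies in $\overline{B(p_i,2)}$, or on an edge, so that it lies in the flat rectangle spanned by that edge and its outward parallel translate, whose far corners lie in the balls at the edge's endpoints; hence $N\subseteq\operatorname{conv}\bigl(K\cup\bigcup_i B(p_i,2)\bigr)\subseteq\operatorname{conv}(D(\gamma))=\operatorname{conv}(\gamma)$, the last equality holding because $\gamma\subseteq D(\gamma)\subseteq\operatorname{conv}(\gamma)$ (the convex hull of $\gamma$ being a disk with connected complement). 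Since the length of a closed curve in a CAT(0) surface is at least the perimeter of its convex hull, and perimeter is monotone under inclusion of convex sets via the $1$-Lipschitz nearest-point projection, we obtain $\ell(\gamma)\ge\operatorname{perimeter}(\operatorname{conv}(\gamma))\ge\operatorname{perimeter}(N)$.

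Next I would compute $\operatorname{perimeter}(N)$ exactly. Its boundary is a smooth convex curve consisting of four geodesic segments, one parallel to each edge of $K$ at distance $2$ and of the same length as that edge (the intervening strip is flat, since $P$ lies strictly on the $K$-side of the edge), together with four circular arcs of radius $2$, one around each $p_i$, subtending the exterior angle of $K$ there. Applying Gauss--Bonnet to $\partial N$ — geodesic curvature $\tfrac12$ along the arcs, zero along the segments, and enclosed singular curvature $2\pi-\theta$ concentrated at the cone point $P$ — shows that the arcs have total length $2\theta$. Hence $\operatorname{perimeter}(N)=\operatorname{perimeter}(K)+2\theta$, and since each edge of $K$ has length $d(p_i,p_{i+1})\ge 2$ and $\theta\ge 2\pi$, this is at least $8+4\pi$.

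Finally, the hypothesis $\ell(\gamma)=8+4\pi$ forces every inequality above to be an equality. Equality $\theta=2\pi$ together with Lemma~\ref{planar} shows that $\gamma$ is a convex curve lying in a plane, which is then all of $C(\gamma)$. Equality $\operatorname{perimeter}(K)=8$ with four edges each of length at least $2$ forces each edge of $K$ to have length exactly $2$. A convex planar quadrilateral whose four sides are equal is a rhombus, hence a parallelogram, completing the proof.
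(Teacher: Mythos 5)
Your overall route is the paper's: the $2$-neighbourhood $N$ of $K$ is exactly the convex hull of the four radius-$2$ disks about the $p_i$ that the paper works with, your Gauss--Bonnet computation showing that the circular arcs of $\partial N$ have total length $2\theta$ is identical to the paper's, and the equality analysis ($\operatorname{perimeter}(K)=8$, $\theta=2\pi$, then Lemma~\ref{planar}) is the same. The only cosmetic difference is that you pass through $\operatorname{conv}(\gamma)$ and monotonicity of perimeter, where the paper projects $\gamma$ directly onto the convex hull of the union of disks.

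Two steps need more than you give them. First, the containment $K\cup\bigcup_i B(p_i,2)\subseteq D(\gamma)$, which you rightly single out as the crux, is not established by your argument: convexity of $K$ together with $P\in\operatorname{int}K\cap D(\gamma)$ does not prevent $\gamma$ from entering the interior of $K$ --- the hypotheses only keep $\gamma$ at distance $2$ from the four \emph{vertices} --- and hence from separating $P$ from a vertex within $K$. In fact the containment cannot be derived from the stated hypotheses at all: a round circle of length $8+4\pi$ together with four points far outside it satisfies every clause of the four-point property, so enclosure must be read as part of the hypothesis. The paper does exactly that (``$\gamma$ encloses \dots $p_1,\dots,p_4$''), and in the application the $p_i$ are points of $D(\beta)\cap\alpha$, hence automatically enclosed; you should state this as an assumption rather than claim to prove it. Second, the rectifiable case is not a one-line appeal to the approximation in Lemma~\ref{isoperimetric}: Gauss--Bonnet requires the flat cone structure, and the smooth approximants $\gamma_j$ need not satisfy the four-point property on the nose (the distances $d(p_i^j,p_k^j)$ and $d(p_i^j,\gamma_j)$ can drop slightly below $2$), which is why the paper inserts a dilation by $2/(2-\delta)$ before passing the inequality $\ell(\gamma)\ge\operatorname{perimeter}(K)+2\theta$ to the limit. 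With those two points supplied, your argument coincides with the paper's.
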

\begin{proof}
We first look at the case where $\gamma$ is piecewise-smooth, so that $C(\gamma)$ has a  flat metric away from a single cone point at $P$
that has cone angle $\theta \ge 2\pi$.  
Let $D_i$  be the disk of radius two in $C(\gamma)$ centered at $p_i$, and
let $b$ be the curve forming the boundary of the convex hull of $\cup D_i$.

Closest point projection to a convex set in a CAT(0) space  from a disjoint curve decreases distances (non-strictly) \cite[Prop. 2.4] {BridsonHaefliger}
so $b$ minimizes length among all curves enclosing $\cup D_i$.
A half space in  $C(\gamma)$ whose boundary meets $b$ either meets it at a point on the boundary
of a disk $\partial D_i$ or meets it in a line segment tangent to two disks.
So $b$ contains four line segments that are 
tangent to two disks and four circular arcs, each  lying on the boundary of a disk $D_i$,
as in Figure~\ref{convexhull}.

\begin{figure}[htbp] 
    \centering
    \includegraphics[width=0.35\linewidth]{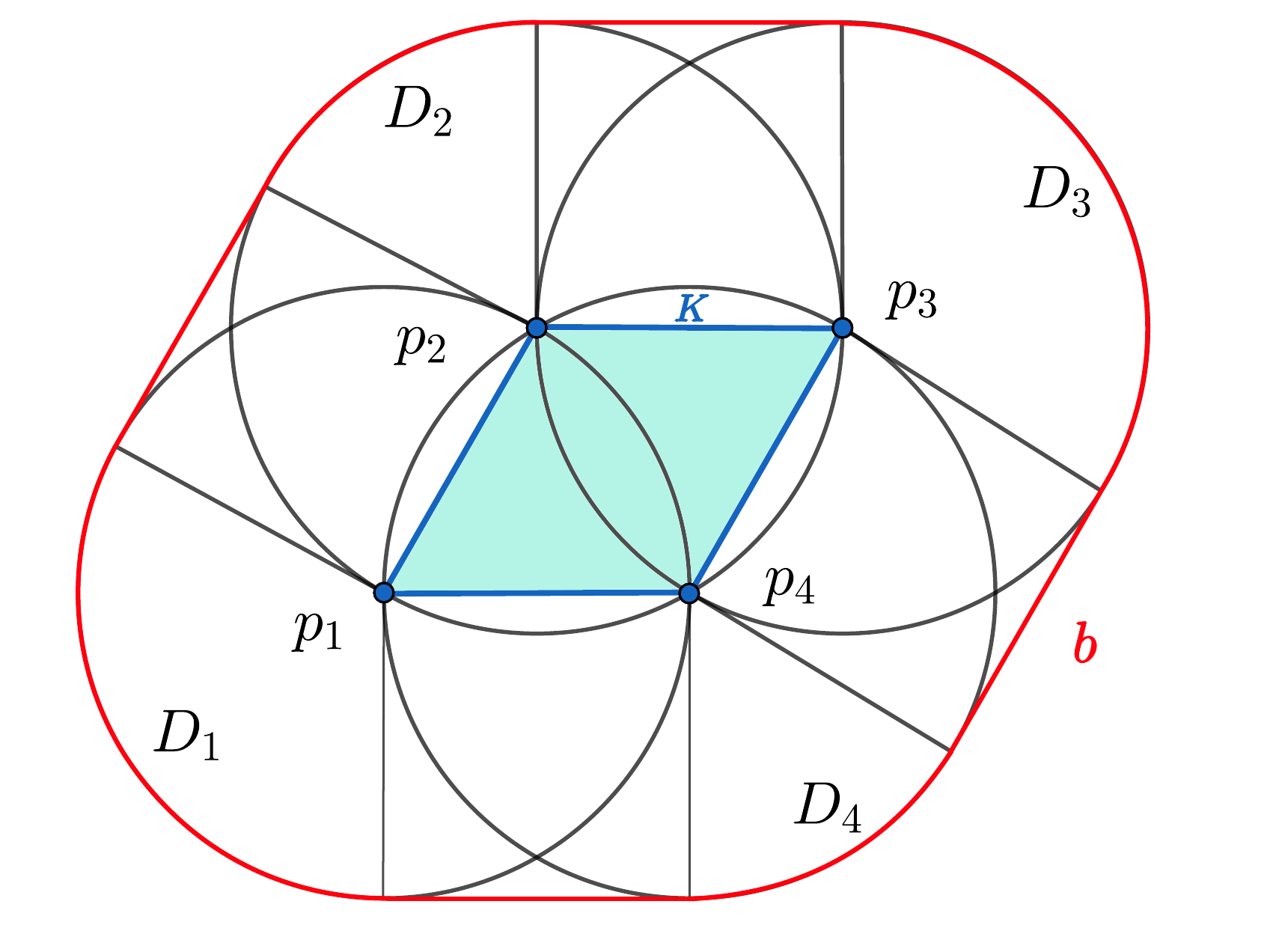}
    \caption{The convex hulls of $\cup p_i$ and $\cup D_i$ have boundaries $K$ in blue and $b$ in red, respectively.}
     \label{convexhull}
\end{figure}

Let $c$ denote the subcurve of $b$ consisting of these four circular arcs, each with constant
geodesic curvature $1/2$ and $D$ the disk enclosed by $b$.
By the Gauss-Bonnet Theorem, $2\pi \chi(D)$ equals the integral of the geodesic curvature of $c$ plus 
 the curvature contributed by the cone point.
Thus 
$$
\frac{1}{2} \ell(c) +  (2\pi - \theta)  = 2\pi 
$$
and $\ell(c) = 2 \theta $.

Each of the four line segments on $b$ is part of a
Euclidean rectangle  consisting of two radii of length $2$ and a segment of length $l_{ij} \ge  2$ on $K$ from $p_i$ to $p_j$.
The total length of the  four line segments equals $\ell(K)$,
and  $\ell(b) \ge \ell(K) +2 \theta $.

Now $\gamma$ encloses and has distance at least two from 
 $p_1, p_2, p_3, p_4$, so   $\gamma$ encloses $\cup D_i$ and  
 \begin {equation}
 \label{ell(b)}
 \ell(\gamma)  \ge  \ell(b) \ge \ell(K)  + 2 \theta.
 \end{equation}
  
But $\ell(\gamma)  = 8  + 4\pi, \ell(K) \ge 8$ and $  \theta \ge 2\pi$, so equality holds,
$\ell(b)  = \ell(\gamma)  =  8  + 4\pi$,  $\ell(K) = 8, \theta = 2\pi$ and $\gamma$ lies in a plane. 
 
We now consider the more general case where $\gamma$ is rectifiable and $\ell(\gamma) = 8  + 4\pi$.
There is a sequence of smooth curves $\gamma_j$ converging pointwise to $\gamma$ whose lengths converge to $\ell(\gamma)$ as $j \to \infty$. \
The center of mass $P_j$ of these curves converges to the center of mass $P$ of $\gamma$ and
the cones  $C(\gamma_j)$ converge to  $C(\gamma)$.
In each cone $C(\gamma_j)$ we can pick four points ${p_1}^j, {p_2}^j, {p_3}^j, {p_4}^j$ that limit to $p_1,p_2,p_3,p_4$, respectively, as $j \to \infty$. 
After passing to a subsequence, we can assume that $P_j$ lies in the interior of the convex hull $K$ of the four points ${p_i}^j$ in $C(\gamma_j)$.
Given  $\delta >0$ and sufficiently large $j$, we can assume that in addition the pairwise distance between any two points
${p_i}^j$ is larger than $2-\delta$, as is the distance from each ${p_i}^j$  to  $\gamma_j$.
The dilation $\lambda_\delta$ of $\R^3$ taking $v \in \R^3$ to $\lambda_\delta(v) = 2v/(2-\delta)$, applied to  $\gamma_j$, 
gives a sequence of somewhat longer smooth curves $\gamma'_j = \lambda_\delta(\gamma_j)$ 
that satisfy the four-point property with respect to ${p_i'}^j = \lambda_\delta({p_i}^j)$ for $j$ sufficiently large. 
We set $\delta_k = 1/k$ and for each $k$ construct a  curve  $\gamma'_k$ by applying the dilation $\lambda_{\delta_k}$ to a curve  $\gamma_j$ for
 which  $\lambda_{\delta_k}(\gamma_j)$
satisfies the four-point property with respect to $\lambda_{\delta_k}({p_i}^j)$.
Then the sequence of curves $\gamma'_k$ converge point-wise to $\gamma$ and have lengths converging to $\ell(\gamma) = 8+4\pi$.

Since the cone points $P_k$ converge to $P$, the cones $C(\gamma'_k)$ converge to $C(\gamma)$ and the smooth curves $\bar \gamma'_k \subset S^2$ converge to $\bar \gamma$, it follows that the cone angles $\theta_k$ of $\gamma'_k$ converge to the cone angle $\theta$ of $\gamma$. 
We claim that $\theta = 2\pi$.
Suppose for contradiction that $\theta > 2\pi$. Then for some $\epsilon > 0$, the cone angle $\theta_k$ of $\gamma'_k$ satisfies $\theta_k > 2\pi + \epsilon$ 
for sufficiently large $k$. Since $\ell (K) \ge 8$,
Equation~\ref{ell(b)} implies that $\ell(\gamma'_k) \ge 8 + 2\theta_k > 8+ 4\pi + 2\epsilon$ for sufficiently large $k$. 
This contradicts the assumption that $\ell(\gamma'_k) \to \ell(\gamma) = 8+ 4\pi $. We conclude that $\theta = 2\pi$ and by Lemma~\ref{planar}, $\gamma$ is contained in a plane.
\end{proof} 

Unit balls centered around two points of a thick curve are disjoint unless the distance between the points along the curve is small. 
The following explicit bound on this distance is a  consequence of Lemma~5 of \cite{CKS}.

\begin{lemma} \label{shortarc} Let $x,y$ be points on the same component of a thick link such that the open unit-radius balls $B_x$ and $B_y$ centered at $x$ and $y$ have non-empty intersection, $\mbox{int}(B_x) \cap \mbox{int}(B_y) \neq \emptyset$. Then the length of an arc of $\gamma$ between $x$ and $y$ is less than $\pi$.
Conversely, if the length of each of the two arcs of $\gamma$ between $x$ and $y$ is greater than or equal to $\pi$, then $\mbox{int}(B_x) \cap \mbox{int}(B_y) = \emptyset$. 
\end{lemma}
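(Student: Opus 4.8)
The plan is to reduce both assertions to two structural facts about a curve $\gamma$ of thickness at least one, both recorded in Lemma~5 of \cite{CKS}: (i) the curvature of $\gamma$ is at most one, so any two points of $\gamma$ joined by a subarc of length $L\le\pi$ are at Euclidean distance at least $2\sin(L/2)$; and (ii) if $p\ne q$ are points of $\gamma$ such that the chord $pq$ meets $\gamma$ orthogonally at both $p$ and $q$, then $|p-q|\ge 2$, i.e. the doubly critical self-distance of $\gamma$ is at least $2$.

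First note that the two statements of the lemma are contrapositive to one another. Since $B_x$ and $B_y$ are open balls of radius one, $\mbox{int}(B_x)\cap\mbox{int}(B_y)\ne\emptyset$ is equivalent to $|x-y|<2$, and ``some arc of $\gamma$ from $x$ to $y$ has length less than $\pi$'' is the negation of ``both arcs have length at least $\pi$.'' Hence it suffices to prove the second assertion: if the two arcs of $\gamma$ between $x$ and $y$ both have length at least $\pi$, then $|x-y|\ge 2$. I would parametrize the component carrying $x$ and $y$ by arclength as a loop of length $\ell$ and consider the compact set $\mathcal A$ of pairs $(a,b)$ on this loop for which each of the two arcs between $a$ and $b$ has length at least $\pi$; this set is nonempty exactly when $\ell\ge 2\pi$, which is forced by the hypothesis, and $(x,y)\in\mathcal A$. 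Let $(a_0,b_0)$ minimize $(a,b)\mapsto|a-b|$ over $\mathcal A$; it is enough to show $|a_0-b_0|\ge 2$. If one of the two arcs between $a_0$ and $b_0$ has length exactly $\pi$, then (i) applied to that (shorter) arc gives $|a_0-b_0|\ge 2\sin(\pi/2)=2$. Otherwise $(a_0,b_0)$ lies in the interior of the arclength constraint and is therefore an unconstrained critical point of $|a-b|^2$; since $\gamma$ is $C^{1,1}$ its velocity vector exists everywhere, so the first-order conditions $\langle a_0-b_0,\gamma'(a_0)\rangle=\langle a_0-b_0,\gamma'(b_0)\rangle=0$ hold, meaning the chord $a_0b_0$ is orthogonal to $\gamma$ at both endpoints. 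As $a_0\ne b_0$ (they lie more than arclength $\pi$ apart on an embedded curve), (ii) gives $|a_0-b_0|\ge 2$. In either case $|x-y|\ge|a_0-b_0|\ge 2$.

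The step requiring the most care is the variational one: one must invoke the $C^{1,1}$ regularity of thick curves from \cite{CKS} to justify differentiating $|a-b|^2$ at the minimizer, and one must check that an interior minimizer really is a doubly critical pair in the sense of (ii), with $a_0\ne b_0$, rather than a degenerate configuration. If Lemma~5 of \cite{CKS} is quoted directly as a chord--arclength estimate for thick curves, the argument collapses to taking its contrapositive; I would nonetheless spell out the reduction above to make explicit exactly which consequences of thickness are being used.
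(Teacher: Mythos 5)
Your argument is correct, and it is worth noting that the paper does not actually prove this lemma: it simply records it as a consequence of Lemma~5 of \cite{CKS}, which is a chord--arc estimate for unit-thickness links (two points at Euclidean distance less than $2$ are joined by a subarc of length less than $\pi$; in fact at most $2\arcsin(|x-y|/2)$). From that statement the lemma is a one-line contrapositive, exactly as you observe at the end of your write-up. Your proposal instead rederives the estimate from the two structural consequences of unit thickness --- curvature at most one (hence the Schur-type bound $|p-q|\ge 2\sin(L/2)$ for subarcs of length $L\le\pi$) and doubly critical self-distance at least $2$ --- via a minimization over the compact set of pairs whose two arcs both have length at least $\pi$, splitting into the boundary case (an arc of length exactly $\pi$, handled by the chord--arc bound) and the interior case (a doubly critical pair, handled by the self-distance bound). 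This is a correct and complete dichotomy: the constraint set is compact and avoids the diagonal, the $C^{1,1}$ regularity established in \cite{CKS} justifies the first-order conditions at an interior minimizer, and these two facts are precisely the Gonzalez--Maddocks/CKS characterization of thickness. The only caveat is bibliographic: what you are using is closer to the thickness characterization (Lemma~1 of \cite{CKS}) plus Schur's comparison than to their Lemma~5 itself, so the citation should be adjusted if you keep the self-contained argument; otherwise quoting Lemma~5 directly and taking the contrapositive, as you suggest, is the paper's (implicit) route and is the shorter one.
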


\section{A 2-component Gordian Unlink}

In this section, we describe a 2-component unlink $L$ and prove that it is gordian. The construction produces a family of gordian unlinks, $L(m,n)$ of which $L = L(-1,1)$ is one. The construction of $L(m,n)$ starts with the unlink shown on the left in Figure~\ref{Lmn} and proceeds by adding $m$ positive full twists between the two middle strands near the top of the link, then $n$ positive full twists between the two leftmost strands, and finally $n$ negative full twists between the two leftmost strands below the curve $\beta$. These twists cancel, so the link remains topologically unchanged. The link $L(3,-2)$ is shown in Figure~\ref{Lmn} right.  

\begin{figure}[htbp]
    \centering
    \includegraphics[width=0.3\linewidth]{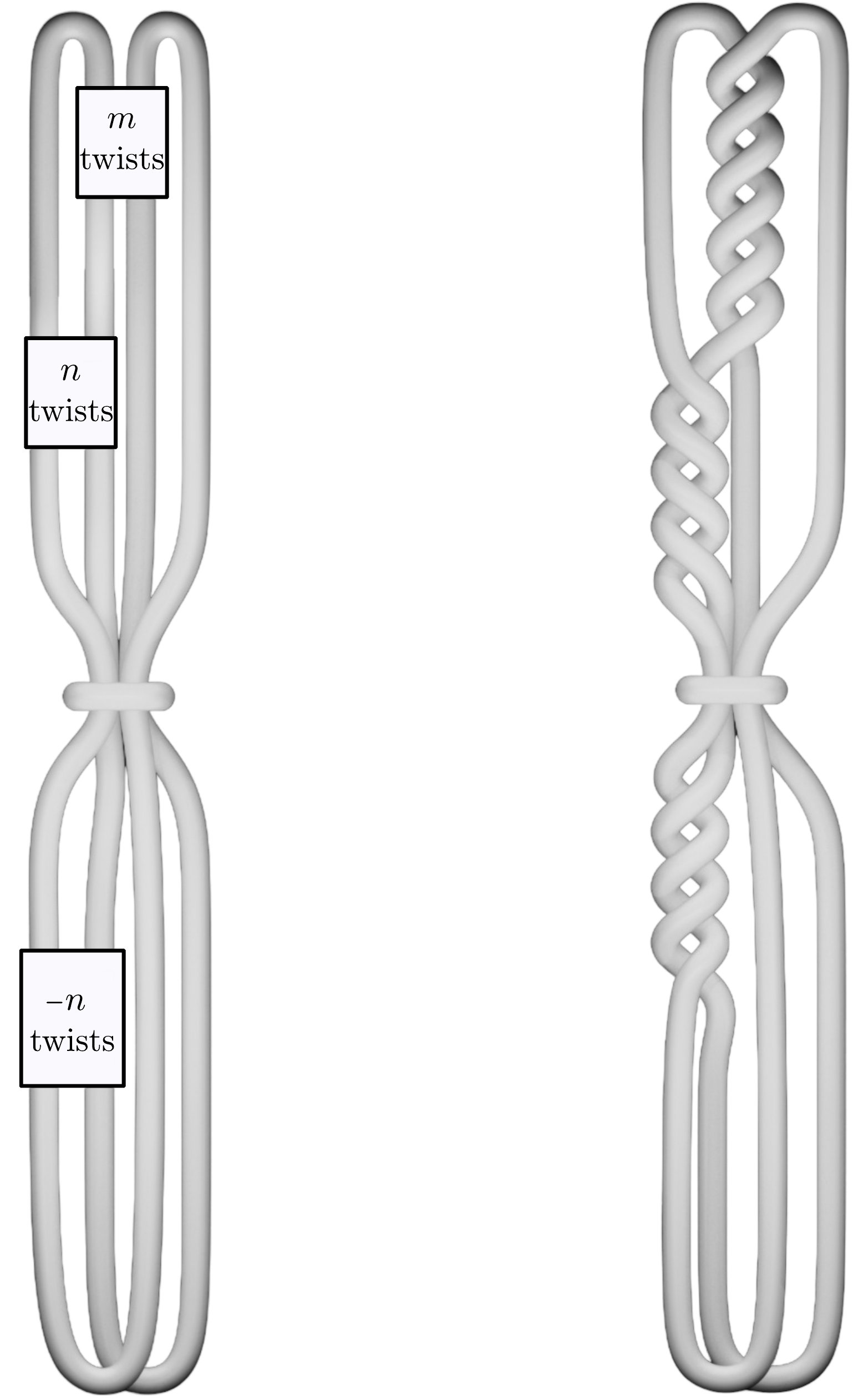}
    \caption{The family of gordian unlinks $L(m,n)$ and an example $L(3,-2)$.}
    \label{Lmn}
\end{figure}

One component $\alpha$ of $L$ intersects the $xy$-plane at four points $p_1, p_2, p_3, p_4$ that form the vertices of a square in the $xy$ plane, with $p_1$ at $(-1,-1)$, $p_2$ at $(-1,1)$, $p_3$ at $(1,1)$ and $p_4$ at $(1,-1)$. These points lie at the centers of four non-overlapping unit-radius disks. The second component $\beta$ lies in the $xy$-plane.

\begin{figure} [htbp]
    \centering
    \includegraphics[width=.6\linewidth]{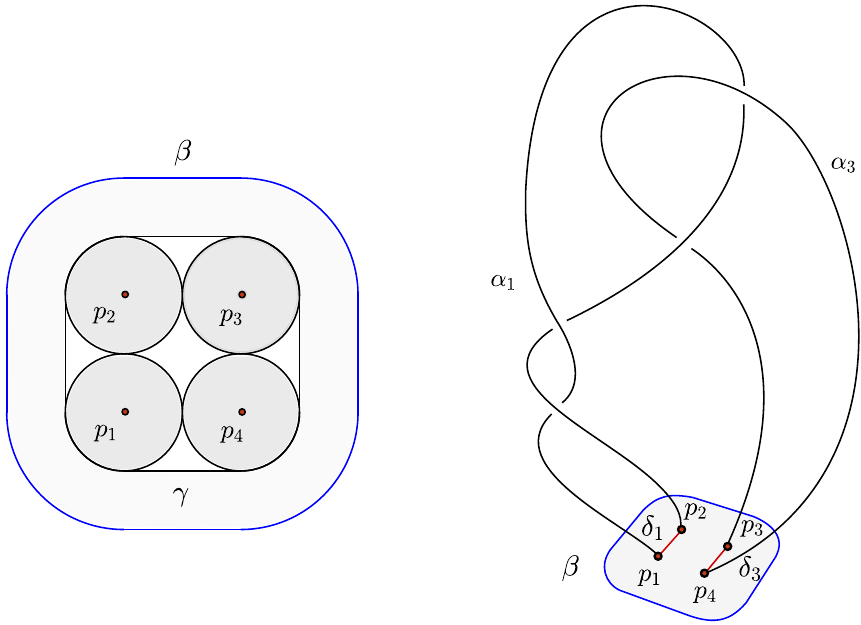}
    \caption{The planar curve $\beta$ contains four disjoint unit radius disks whose centers have distance two from $\beta$. The second component $\alpha$ passes through the centers of the disks in the indicated order. The curves $\gamma_1 = \alpha_1 \cup \delta_1$ and $\gamma_3 = \alpha_3 \cup \delta_3$ have linking number $m$,  while the curves $\gamma_2 = \alpha_2 \cup \delta_2$ and $\gamma_ 4 = \alpha_4 \cup \delta_4$ have linking number $n$.  In the figure at right, $m=-1$ and $ n=1$.}
    \label{fig:configuration}
    \end{figure}

A family of configurations of four points in the Euclidean plane is shown in Figure~\ref{fig:4discs}. In each configuration, the points form the centers of four non-overlapping unit disks, which are enclosed by a curve of length $8 + 2\pi$.  This curve in turn is enclosed by a convex curve $\beta$ whose distance from each of the four points is at least two. The length of $\beta$ in all cases is $8+4\pi$. 


\begin{figure} [htbp]
    \centering
    \includegraphics[width=0.9\linewidth]{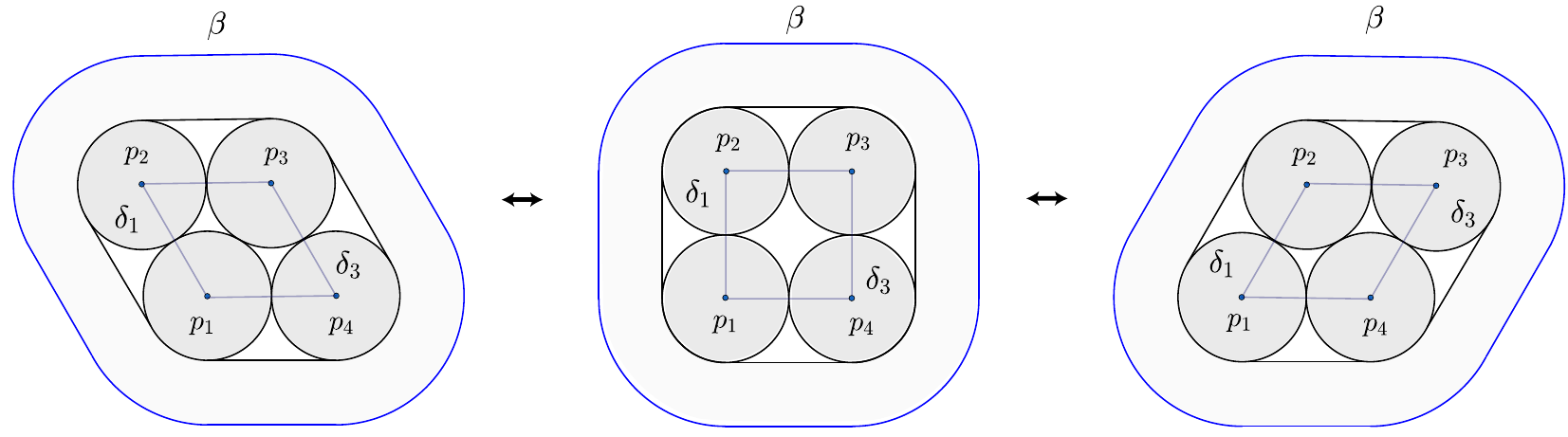}
    \caption{The link component $\beta$ is one of a family of planar convex curves, each of length $8 + 4\pi$.}
    \label{fig:4discs}
\end{figure}

The arc of $\alpha$ between $p_i$ and $p_{i+1}$ is called $\alpha_i$ (where we set $p_5 = p_1$). As can be seen in Figure~\ref{fig:configuration}, the arcs $\alpha_1$ and $\alpha_3$ are ``linked'' in the following sense. We can connect the endpoints $p_1, p_2$ of $\alpha_1$ by a straight segment $\delta_1$, and similarly connect the endpoints of $\alpha_3$ by a straight segment $\delta_3$. Then the closed curves $\gamma_1 = \alpha_1 \cup \delta_1$ and $\gamma_3 = \alpha_3 \cup \delta_3$ have non-zero linking number. Similarly, the arcs $\alpha_2$ and $\alpha_4$ are linked when closed off with the arcs $\delta_2$ from $p_2$ to $p_3$ and $\delta_4$ from $p_4$ to $p_1$

We now consider a thick isotopy $L(t)$ of $L= L(0)$, with $t \in [0, \infty]$.  Let $\alpha(t)$ and $\beta(t)$ denote the curves forming the two components of the link $L(t)$ during this thick isotopy, $C(\beta(t))$ the cone formed by coning $\beta(t)$ to the point $P(t)$ at the center of mass of $\beta(t)$,  $D(\beta(t))$ the disk bounded by $\beta(t)$ in $C(\beta(t))$, and $\theta(t)$ the corresponding cone angle. Given four points $p_1(t), p_2(t), p_3(t), p_4(t)$ in $C(\beta(t)) \cap \alpha(t)$, let $K(t)$ be the curve forming the boundary of their convex hull in $C(\beta(t))$ and $\delta_i(t)$  the geodesic in $C(\beta(t))$ from $p_i(t)$ to $p_{i+1}(t)$ (with $p_5  = p_1$).
We extend the arc $\alpha_i(t)$ to form a simple closed curve $\gamma_i(t)$ by joining its endpoints using the geodesic segment $\delta_i(t)$ on $D(\beta(t))$ that connects its endpoints $p_i(t)$ and $p_{i+1}(t)$ on $D(\beta(t))$.  Both the linking number of $(\gamma_1(0)$ and $\gamma_3(0) )$ and the linking number of $(\gamma_2(0),  \gamma_4(0) )$ are non-zero, see Figure~\ref{fig:configuration}.

We first establish a transversality property when $C(\beta(t))$ is a flat plane. Note that $\alpha(t)$ is a $C^{1,1}$ curve, with a well-defined tangent vector at each point.

\begin{lemma} 
\label{transversality}
 If $\theta(t) = 2\pi$ and $D(\beta(t))$ intersect $\alpha(t) $ in four points $p_i(t)$ that satisfy the four-point property,
and $\ell(\alpha_i(t)) > \pi$, then $\alpha(t)$ is transverse to $D(\beta(t))$ at these four points.
 \end{lemma}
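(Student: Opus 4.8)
The plan is to prove something slightly stronger than transversality: that at each $p_i(t)$ the unit tangent to $\alpha(t)$ is in fact \emph{perpendicular} to the plane of $D(\beta(t))$. I will get this from the rigidity forced by Lemma~\ref{4pts1} together with the thickness of $L(t)$ as a whole. Since $\theta(t)=2\pi$, Lemma~\ref{planar} places $\beta(t)$ on a flat plane $\Pi$, so $C(\beta(t))=\Pi$, the disk $D(\beta(t))$ is the planar region that $\beta(t)$ bounds, and each $p_i(t)$ lies in the interior of $D(\beta(t))\subset\Pi$ (it has distance at least $2$ from $\beta(t)$). Because the isotopy preserves length, $\ell(\beta(t))=8+4\pi$, so Lemma~\ref{4pts1} applies and $p_1(t)p_2(t)p_3(t)p_4(t)$ is a parallelogram with sides of length $2$. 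I would first extract from the proof of Lemma~\ref{4pts1} the stronger fact that $\beta(t)$ is exactly the curve $b$ bounding the convex hull of the union of the radius-$2$ disks $D_i\subset\Pi$ centered at the $p_i(t)$: once $\theta(t)=2\pi$, the planar convex-hull estimate in that proof reads $\ell(b)=\ell(K)+2\theta=8+4\pi=\ell(\beta(t))$, and a convex planar curve that encloses a convex region and has the same length as that region's boundary must coincide with it. In particular $\beta(t)$ contains a circular arc $a_1\subset\partial D_1$ of positive length (its length is the positive exterior angle of the parallelogram at $p_1$), every point of which has distance exactly $2$ from $p_1(t)$.

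Next I would pin down the distance between the two components. On the one hand $d(\alpha(t),\beta(t))\le|p_1(t)-w|=2$ for any $w\in a_1$. On the other hand, since $L(t)$ is a thick link, $d(\alpha(t),\beta(t))\ge 2$: if a shortest segment joining the two components had length $\rho<2$, its midpoint would lie within distance $\rho/2<1$ of $L(t)$ while having two distinct nearest points on $L(t)$, contradicting reach $\ge 1$. Hence $d(\alpha(t),\beta(t))=2$, and this distance is realized by $p_1(t)$ together with every point $w\in a_1$. Now parametrize a neighborhood of $p_1(t)$ in $\alpha(t)$ by arc length as $c(s)$ with $c(0)=p_1(t)$; since $\alpha(t)$ is $C^{1,1}$, the function $s\mapsto|c(s)-w|^2$ is $C^1$ and attains its global minimum $4$ at $s=0$, because $|c(s)-w|\ge d(\alpha(t),\beta(t))=2$ throughout. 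Differentiating at $0$ gives $\langle p_1(t)-w,\,c'(0)\rangle=0$, so the tangent $c'(0)$ to $\alpha(t)$ at $p_1(t)$ is orthogonal to $w-p_1(t)$. As $w$ ranges over the positive-length arc $a_1$, the vectors $w-p_1(t)$ take at least two non-parallel directions in $\Pi$, so $c'(0)$ is orthogonal to $\Pi$; in particular $\alpha(t)$ is transverse to $D(\beta(t))$ at $p_1(t)$. The identical argument at $p_2(t),p_3(t),p_4(t)$ completes the proof.

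The step I expect to be the main obstacle is the first one: upgrading the conclusion of Lemma~\ref{4pts1} to the statement that $\beta(t)$ \emph{is} the hull boundary $b$, so that it really contains a circular arc at distance exactly $2$ from $p_1(t)$ rather than only satisfying the properties listed in that lemma. This requires rereading the equality discussion in the proof of Lemma~\ref{4pts1} in the flat-plane case and invoking the elementary fact about convex curves enclosing convex regions. Everything afterwards is routine: the bound $d(\alpha(t),\beta(t))\ge 2$ is the standard consequence of reach $\ge 1$ (a companion to Lemma~\ref{shortarc}), and the transversality itself is a one-line first-order computation. I note in passing that the hypothesis $\ell(\alpha_i(t))>\pi$ does not seem to be needed for this particular conclusion; presumably it is carried along because it holds in the situations where the lemma is applied, or is required for the later linking-number arguments.
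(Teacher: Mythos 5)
Your proof is correct, but it takes a genuinely different route from the paper's. The paper stays entirely on the component $\alpha(t)$: since $K(t)$ is a parallelogram with sides of length exactly two, $p_1(t)$ lies on the spheres of radius $2$ centered at $p_2(t)$ and $p_4(t)$, and the hypothesis $\ell(\alpha_i(t))>\pi$ together with Lemma~\ref{shortarc} forces $\alpha(t)$ near $p_1(t)$ to stay outside the corresponding open balls; hence the unit tangent $v_1$ is tangent to both spheres, and the intersection of the two tangent planes is the line normal to $D(\beta(t))$. You instead realize the distance $2$ from $p_1(t)$ to the \emph{other} component: this requires upgrading Lemma~\ref{4pts1} to the statement that $\beta(t)$ coincides with the hull boundary $b$ --- which you correctly flag as the one step needing extra work, and which does follow from $\ell(\beta(t))=\ell(b)$ together with the convexity of $\beta(t)$ supplied by Lemma~\ref{planar} and the strict monotonicity of perimeter for nested planar convex bodies --- plus the inter-component distance bound $d(\alpha(t),\beta(t))\ge 2$ coming from reach $\ge 1$. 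Both arguments in fact give orthogonality, not merely transversality. Your closing remark that $\ell(\alpha_i(t))>\pi$ is not needed is accurate for your route, but the hypothesis is genuinely used in the paper's: it is exactly what lets Lemma~\ref{shortarc} keep a whole neighborhood of $p_1(t)$ in $\alpha(t)$ outside the radius-$2$ balls about $p_2(t)$ and $p_4(t)$. The trade-off is that the paper's argument uses only the stated conclusions of Lemma~\ref{4pts1}, while yours needs the extra rigidity of $\beta(t)$ but imposes no condition on the arc lengths of $\alpha(t)$.
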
 
\begin{proof} 
The disk $D(\beta(t))$ contains four non-overlapping radius one disks $D_i(t)$ with $D_i(t)$ centered on $p_i(t)$.
The points $p_i(t)$ form the vertices of a parallelogram with edge lengths two, interior angles between $\pi/3$ and $2\pi/3$ and shortest diagonal having length at least two.
The curve $\beta(t)$ has distance at least two from each $p_i(t)$, as shown in Figure~\ref{fig:4discs}.

We show that $\alpha(t) $  is transverse to $D(\beta(t))$ at $p_1(t)$.  
Let $v_1$ be the unit tangent vector to $\alpha(t)$ at  $p_1(t)$.
Note that if $p_1(t)$ lies on a sphere centered at a point $q$ and 
$\alpha(t)$ is disjoint from the interior of the ball bounded by this sphere
in a neighborhood of $p_1(t)$, 
then $v_1$ is tangent to this sphere.  

Since $\ell(\alpha_i(t) > \pi$, Lemma~\ref{shortarc} implies that the unit balls in $\R^3$ around any two of the points
$p_i(t)$ are disjoint. Moreover, $p_1(t)$ lies on the spheres of radius two around each of
 $ p_2(t)$ and  $p_4(t)$ since it has distance two from each.
So $v_1$ is tangent to the spheres
of radius 2 centered at $ p_2(t)$ and at $p_4(t)$.  The intersection of
the two tangent planes of these spheres at  $p_1(t)$  is the line normal to $D(\beta(t))$
at $p_1(t)$, so $\alpha(t)$ is transverse (in fact orthogonal) to $D(\beta(t))$ at $p_1(t)$.
The same argument applies at the other three points. 
 \end{proof} 

The following lemma constrains the lengths of the arcs $\{ \alpha_i(t) \}$. 

\begin{lemma} 
\label{4lengths}
Consider a thick isotopy $L(t)$ of $L$ and assume that for some $t \ge 0$, 
\begin{enumerate}
    \item $D(\beta(t)) \cap \alpha(t)$ contains four points $p_1(t), p_2(t), p_3(t), p_4(t)$ lying along $\alpha(t)$ in the indicated order.
    \item  $\ell (\alpha_i(t)) > \pi,$ $ i=1, \dots, 4$.  
    \item $K(t)$ is a convex 4-gon in $C(\beta(t))$, with vertices $p_i(t)$ and side lengths greater or equal to two.
    \item $P(t)$ lies in the interior of  $K(t) $.   
    \item   $\mbox{\rm lk}(\gamma_1(t),  \gamma_3(t) ) \ne 0 $ and  $\mbox{\rm lk}(\gamma_2(t),  \gamma_4(t)) \ne 0$.
\end{enumerate}
Then,
\begin{enumerate}
    \item  $\theta(t) = 2\pi$ and $\beta(t)$ lies in a plane.
    \item The length of each arc $\alpha_i(t)$ satisfies $\ell (\alpha_i(t)) > 8.8$.
\end{enumerate}
\end{lemma}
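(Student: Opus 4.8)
The plan is to obtain the first conclusion by recognizing $\beta(t)$ as a curve with the four-point property and invoking Lemma~\ref{4pts1}, and the second by coning the closed-up arc $\gamma_1(t)$ and applying the isoperimetric inequality of Lemma~\ref{isoperimetric}.

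For the first conclusion, since the isotopy preserves the length of each component we have $\ell(\beta(t)) = \ell(\beta(0)) = 8+4\pi$. Hypotheses (3) and (4) already say that $p_1(t),\dots,p_4(t)$ are the vertices of a convex quadrilateral $K(t)$ in $C(\beta(t))$ whose interior contains the cone point $P(t)$, so it remains to verify the distance conditions in the definition of the four-point property. By hypothesis (2) each $\ell(\alpha_k(t)) > \pi$, and any two of the four points are joined along $\alpha(t)$ by two arcs, each a union of at least one of the $\alpha_k(t)$ and so of length greater than $\pi$; Lemma~\ref{shortarc} then gives $d_{\R^3}(p_i(t),p_j(t)) \ge 2$. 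Since a geodesic of $C(\beta(t))$ is, via the coning map, a rectifiable path in $\R^3$, the cone distance dominates the Euclidean distance, so $d_{C(\beta(t))}(p_i(t),p_j(t)) \ge 2$. Likewise, distinct components of a thick link lie at Euclidean distance at least $2$ (otherwise the midpoint of a shortest segment between them would have two distinct nearest points on the link, violating reach $\ge 1$), so $d_{C(\beta(t))}(p_i(t),\beta(t)) \ge 2$. Thus $\beta(t)$ has the four-point property, and Lemma~\ref{4pts1} gives $\theta(t) = 2\pi$, that $\beta(t)$ lies in a plane $\Pi$, and that $K(t)$ is a rhombus with all sides of length $2$.

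For the second conclusion, we now have $C(\beta(t)) = \Pi$, the disk $D(\beta(t))$ equal to the planar region it bounds, and the geodesics $\delta_i(t)$ equal to the straight sides of $K(t)$, so $\ell(\delta_i(t)) = 2$ and $\delta_1(t),\delta_3(t)$ are opposite sides of the rhombus. Running Lemma~\ref{shortarc} as before --- the two arcs of $\alpha(t)$ joining a point of $\alpha_1(t)$ to a point of $\alpha_3(t)$ each contain all of $\alpha_2(t)$ or all of $\alpha_4(t)$ --- gives $d_{\R^3}(\alpha_1(t),\alpha_3(t)) \ge 2$ and $d_{\R^3}(p_i(t),\alpha_3(t)) \ge 2$ for $i=1,2$. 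Now invoke hypothesis (5): a nonzero linking number forbids $\gamma_1(t)$ from bounding in the complement of $\gamma_3(t)$, so the disk $D(\gamma_1(t))$ that $\gamma_1(t)$ bounds in its cone $C(\gamma_1(t))$ must meet $\gamma_3(t)$. After a small isotopy of $\gamma_3(t)$ through curves disjoint from $\gamma_1(t)$, which does not change the linking number, we may push the straight part $\delta_3(t)$ off of $D(\gamma_1(t))$, so this intersection point $y$ lies on $\alpha_3(t)$. Then $d_{\R^3}(y,\alpha_1(t)) \ge 2$, while $d_{\R^3}(y,p_1(t)) \ge 2$, $d_{\R^3}(y,p_2(t)) \ge 2$ and $\ell(\delta_1(t)) = 2$ force $d_{\R^3}(y,\delta_1(t)) \ge \sqrt3$; hence $d_{C(\gamma_1(t))}(y,\gamma_1(t)) \ge d_{\R^3}(y,\gamma_1(t)) \ge \sqrt3$. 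By Lemma~\ref{planar} the cone angle of $\gamma_1(t)$ is at least $2\pi$, so $C(\gamma_1(t))$ is CAT$(0)$ and the metric ball of radius $\sqrt3$ about $y$ is contained in $D(\gamma_1(t))$ and has area at least $3\pi$. Lemma~\ref{isoperimetric} then gives $3\pi \le \ell(\gamma_1(t))^2/4\pi$, so $\ell(\gamma_1(t)) \ge 2\sqrt3\,\pi$ and
\[
\ell(\alpha_1(t)) = \ell(\gamma_1(t)) - \ell(\delta_1(t)) \ge 2\sqrt3\,\pi - 2 > 8.8 .
\]
The identical argument, using that $\gamma_2(t)$ and $\gamma_4(t)$ link, handles $\alpha_2(t)$ and $\alpha_4(t)$, and the case of $\alpha_3(t)$ is symmetric to that of $\alpha_1(t)$.

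I expect the main obstacle to be the step that places the intersection of $\gamma_3(t)$ with the spanning disk on the smooth arc $\alpha_3(t)$ rather than on the straight segment $\delta_3(t)$: one must verify that $\delta_3(t)$ can be pushed off $D(\gamma_1(t))$ by an isotopy of $\gamma_3(t)$ through curves disjoint from $\gamma_1(t)$, which calls for a general-position argument in $\R^3 \setminus \gamma_1(t)$ together with the observation that $D(\gamma_1(t))$ lies in the convex hull of $\gamma_1(t)$. A secondary point is the lower bound $3\pi$ for the area of a radius-$\sqrt3$ metric ball in the CAT$(0)$ cone $C(\gamma_1(t))$, which follows from the cone angle being at least $2\pi$ but must be argued with some care when the ball contains the cone point. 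Note that none of the estimates involve $\ell(\alpha(t))$ or the particular shape of $\beta(t)$, so they are uniform over all thick isotopies of $L$.
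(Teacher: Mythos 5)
Your overall strategy coincides with the paper's: part (1) via the four-point property and Lemma~\ref{4pts1} (and your verification of the diagonal distances and of $d(p_i,\beta(t))\ge 2$ is in fact more explicit than the paper's), and part (2) via an intersection point of $\gamma_3(t)$ with the coned disk $D(\gamma_1(t))$, a $\sqrt3$ distance bound, and the isoperimetric inequality of Lemma~\ref{isoperimetric}. The numerics ($\ell(\gamma_1)\ge 2\sqrt3\,\pi$, hence $\ell(\alpha_1)\ge 2\sqrt3\,\pi-2>8.8$) match.

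The one genuine gap is the step you yourself flag: pushing $\delta_3(t)$ off $D(\gamma_1(t))$ by an isotopy of $\gamma_3(t)$ in the complement of $\gamma_1(t)$ that fixes $\alpha_3(t)$. General position does not deliver this: the endpoints $p_3(t),p_4(t)$ of $\delta_3(t)$ are pinned, and if $\delta_3(t)$ has nonzero algebraic intersection with $D(\gamma_1(t))$ its intersections cannot be cancelled by a small perturbation; replacing $\delta_3(t)$ by some other arc avoiding $D(\gamma_1(t))$ is possible but may change $\mbox{\rm lk}(\gamma_1,\gamma_3)$, which is exactly the quantity you need to preserve. The observation that $D(\gamma_1(t))$ lies in the convex hull of $\gamma_1(t)$ does not help, since that hull can be large enough to contain $\delta_3(t)$. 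The repair is the route the paper takes, and you already have every ingredient for it: show that \emph{all} of $\gamma_3(t)$, not just $\alpha_3(t)$, lies at distance at least $\sqrt3$ from $\gamma_1(t)$. Indeed $\delta_1(t)$ and $\delta_3(t)$ are opposite sides of a rhombus with side $2$ and angles in $[\pi/3,2\pi/3]$ (forced by the diagonals having length at least $2$), so $d(\delta_1,\delta_3)\ge 2\sin(\pi/3)=\sqrt3$; and a point of $\alpha_1(t)$ has distance at least $2$ from each of $p_3(t)$ and $p_4(t)$ by Lemma~\ref{shortarc}, hence distance at least $\sqrt3$ from the length-two segment $\delta_3(t)$ by the same right-triangle estimate you used for $\delta_1(t)$. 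With $d(\gamma_1(t),\gamma_3(t))\ge\sqrt3$ established, any intersection point of $D(\gamma_1(t))$ with $\gamma_3(t)$ — wherever it lies — serves as the center of the radius-$\sqrt3$ ball, and the isotopy step can be deleted entirely. Your secondary caveat about the area of that ball when it contains the cone point is legitimate but harmless: a cone angle $\ge 2\pi$ only increases the area over the Euclidean value $3\pi$, and the paper asserts the bound in the same way.
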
 
\begin{proof} 
Assumption (2) and Lemma~\ref{shortarc}  imply that $D(\beta(t))$ contains four non-overlapping radius-one disks $D_i(t)$ centered at the four points $\{p_i(t)\}$.  
Assumptions (3) and (4) imply that $\beta(t)$ has the four-point property with respect to $\{p_i(t)\}$.
Since $\ell(\beta(t))  = 8+4\pi$, Lemma~\ref{4pts1} implies that $\theta(t) =  2\pi$ and that $\beta(t)$ lies in a plane.
Moreover, the distance between any two of the $\{p_i(t)\}$ is equal to two and $K(t)$ is a parallelogram with internal angles
between $\pi/3$ and $2\pi/3$.  

We now bound from below $\{ \ell(\alpha_i(t)) \}$, utilizing the linking property of the curves $\{ \gamma_i(t) \}$.
While we assumed $ \ell(\alpha_i(t))> \pi$,  
we will show that $ \ell(\alpha_i(t))>   8.8 $. 

A point on $\alpha_1(t)$  has distance at least two from $\alpha_3(t)$ and at least $\sqrt{3}$ from $\delta_3(t)$,
and a point on $\delta_1(t)$ has distance at least $\sqrt{3}$ to $\alpha_3(t)$ and at least $\sqrt{3}$ to $\delta_3(t)$.
See Figure~\ref{fig:4balls}. So, the distance between $\gamma_1(t)$ and $\gamma_3(t)$   is at least $\sqrt{3}$.

\begin{figure}[htbp]
\centering
\includegraphics[width=0.8\linewidth]{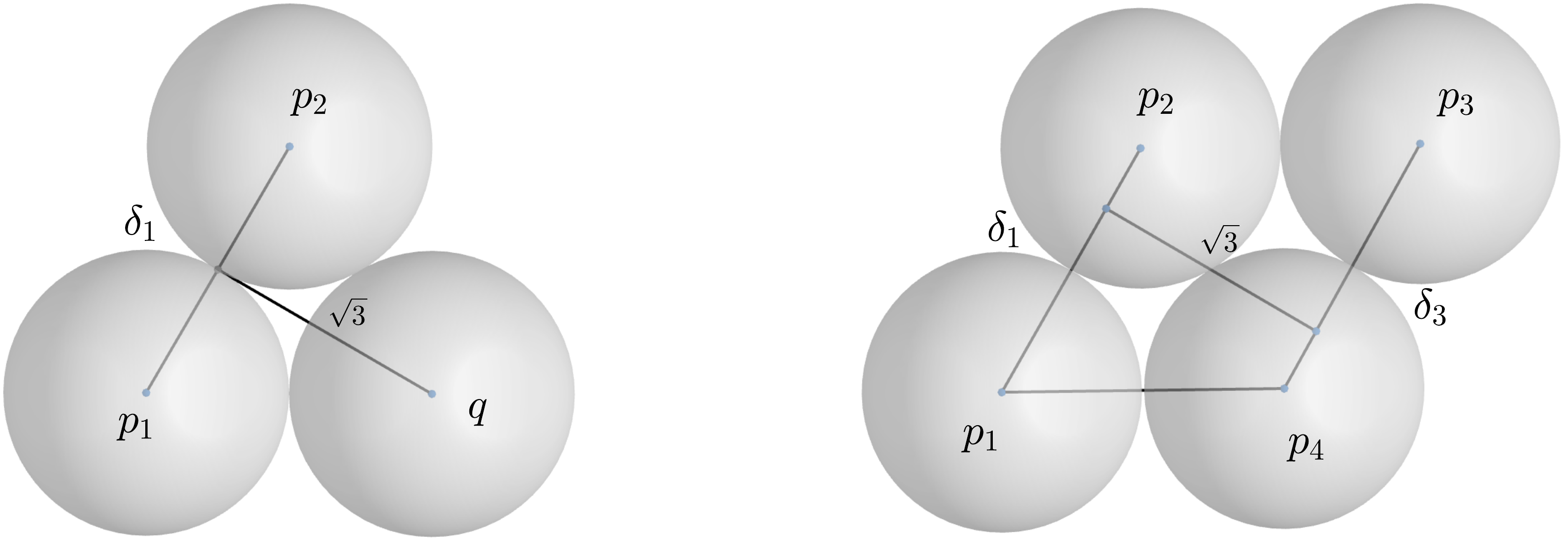}
\caption{The distance from a point $q$ in $\alpha_3$ or in $\delta_3$ to the straight segment $\delta_1 $ is at least $\sqrt{3}$.}
\label{fig:4balls}
\end{figure}

The disk $D_1$ bounded by $\gamma_1(t)$ in $C(\gamma_1(t)) $  has a cone angle of at least $2\pi$ and so is a CAT(0) space.  
$D_1$ intersects $\gamma_3(t)$ at least one point $q$,  since the linking numbers of $\gamma_1(t)$ and $\gamma_3(t) $ are assumed to be non-zero.
A disk $D_2$ of radius $\sqrt{3}$ around $q$ is contained in $D_1$ and Area$(D_2) \ge \pi (\sqrt{3})^2 = 3\pi $.
So, Area$(D_1) \ge 3\pi$, and Lemma~\ref{isoperimetric} implies that  $\ell(\gamma_1(t)) >  2\pi\sqrt{3}$. 
Subtracting $\ell(\delta_1)=2$ gives $\ell(\alpha_1(t)) \ge 2\pi\sqrt{3}-2 > 8.8$ as claimed. 
A similar argument shows that each of the arcs $\alpha_i(t), i=1,2,3,4$ have length greater than $8.8$.
\end{proof}

\noindent
{\bf Remark.} By choosing both $m$ and $n$ large, we can arrange that $\gamma_1(t)$ and $\gamma_3(t)$ have arbitrarily large linking numbers and similarly $\gamma_2(t)$ and $\gamma_4(t)$.
Then each arc $\alpha_i(t)$ is forced to be arbitrarily long, as indicated in Figure~\ref{Lmn}.

\begin{theorem} \label{gordian}
The unlink $L$ is gordian.
\end{theorem}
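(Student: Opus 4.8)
The plan is to argue by contradiction. Suppose a thick isotopy $L(t)$, $t \in [0,\infty]$, has $L(0) = L$ and splits $L$, so that at some time $t_1$ (which we may take finite) the components $\alpha(t_1)$ and $\beta(t_1)$ lie in complementary closed half-spaces. I will follow, as $t$ grows, the five conditions of Lemma~\ref{4lengths}, and show that the largest interval $[0,t_0]$ on which those conditions hold throughout is both closed at $t_0$ and extendable beyond $t_0$, forcing $t_0 = t_1$, contrary to the conditions failing at $t_1$. First I would verify conditions (1)--(5) at $t=0$ for the explicit link $L = L(-1,1)$: $\beta(0)$ is the planar convex curve of Figure~\ref{fig:4discs}, so $D(\beta(0))$ is the flat disk it bounds and $\alpha(0)$ meets it exactly in the four square vertices $p_1,\dots,p_4$ in this cyclic order; the centroid $P(0)$ is the center of the square, hence interior to $K(0)$; the sides of $K(0)$ have length $2$; the linking numbers $\mathrm{lk}(\gamma_1(0),\gamma_3(0))$ and $\mathrm{lk}(\gamma_2(0),\gamma_4(0))$ are $m=-1$ and $n=1$, both nonzero; and the arcs $\alpha_i(0)$ have length greater than $\pi$. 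Then I would note that condition (1) fails at $t_1$: the centroid $P(t_1)$ lies in the convex hull of $\beta(t_1)$, the coned disk $D(\beta(t_1))$ consists of convex combinations of $P(t_1)$ with points of $\beta(t_1)$ and so lies in the same half-space as $\beta(t_1)$, making it disjoint from $\alpha(t_1)$.

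Set $t_0 = \sup\{\, t \le t_1 : \text{conditions (1)--(5) hold for every } s \in [0,t]\,\}$, so $0 \le t_0 < t_1$. For the closedness step I would let $s \to t_0^-$. For such $s$, Lemma~\ref{4lengths} gives $\theta(s) = 2\pi$ with $\beta(s)$ planar, the $p_i(s)$ at mutual distance exactly $2$, $K(s)$ a parallelogram of side $2$ with $P(s)$ in its interior, $\ell(\alpha_i(s)) > 8.8$, and, by Lemma~\ref{transversality}, $\alpha(s)$ orthogonal to $D(\beta(s))$ at the $p_i(s)$. Using continuity of the curves, their cone angles, their arc lengths, and the intersection points, I obtain $\theta(t_0)=2\pi$ so that $\beta(t_0)$ is convex and planar by Lemma~\ref{planar}; the four points survive at mutual distance $2$; and since the rigidity in Lemma~\ref{4pts1} forces $\beta(t_0)$ to be the boundary of the convex hull of the radius-$2$ disks about the $p_i(t_0)$, a curve centrally symmetric about the center of $K(t_0)$, the centroid $P(t_0)$ stays interior to $K(t_0)$. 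The linking numbers are locally constant along the isotopy, because the curves $\gamma_i(t) = \alpha_i(t)\cup\delta_i(t)$ stay embedded and $\gamma_1(t),\gamma_3(t)$ (likewise $\gamma_2(t),\gamma_4(t)$) stay disjoint: each $\delta_i(t)$ lies on $\partial K(t)$ so opposite ones are disjoint, and each $\alpha_i(t)$ meets $D(\beta(t))$ only at its endpoints. Hence conditions (1)--(5) hold at $t_0$.

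For the openness step I would start from the rigid configuration at $t_0$, in particular $\ell(\alpha_i(t_0)) > 8.8 > \pi$. For $t$ near $t_0$: continuity keeps $\ell(\alpha_i(t)) > \pi$, so Lemma~\ref{shortarc} makes the open unit balls about any two of the $p_i(t)$ disjoint, hence $|p_i(t)-p_j(t)|\ge 2$ in $\R^3$; since every path in the cone $C(\beta(t))\subset\R^3$ is at least as long as the Euclidean chord of its endpoints, the geodesic distances in $C(\beta(t))$ between the $p_i(t)$ are also at least $2$, so condition (3) persists even though $\theta(t)=2\pi$ is not an open condition. Condition (4) persists since $P(t)$ and $K(t)$ vary continuously with $P(t_0)$ interior to $K(t_0)$; condition (5) persists by local constancy of the linking numbers; condition (2) persists by continuity of $\ell(\alpha_i)$; and condition (1) persists because the four orthogonal intersection points of Lemma~\ref{transversality} are transverse and hence stable, while compactness keeps $\alpha(t)$ from acquiring new intersections with $D(\beta(t))$. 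So conditions (1)--(5) hold on a neighborhood of $t_0$, contradicting its choice; but they fail at $t_1$. Therefore no splitting thick isotopy exists and $L$ is gordian.

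The step I expect to be the main obstacle is closedness: one must ensure that in the limit $t\to t_0$ the four marked intersection points neither collide nor escape nor acquire extra companions, and that the centroid remains interior. The leverage is the rigidity of Lemmas~\ref{4pts1} and~\ref{4lengths} — the only configuration compatible with the length and four-point constraints is the side-$2$ parallelogram with $\beta$ the convex-hull curve — together with the elementary observation that intrinsic distance in the cone dominates Euclidean distance, which is also what makes the openness step go through.
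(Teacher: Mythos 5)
Your proposal is correct and takes essentially the same route as the paper: verify the five hypotheses of Lemma~\ref{4lengths} at $t=0$, show the set of times where they hold is open (via the slack $8.8>\pi$, Lemma~\ref{shortarc}, the transversality of Lemma~\ref{transversality}, and the $\sqrt{3}$ separation of $\gamma_1,\gamma_3$ keeping linking numbers locally constant) and closed (via continuity and the rigidity of Lemma~\ref{4pts1}), and conclude that $\alpha(t)\cap D(\beta(t))\ne\emptyset$ for all $t$, which is incompatible with splitting. Your supremum formulation is just a repackaging of the paper's connectedness argument on the set $W$, and the supporting details (cone distance dominating the chord, the centroid staying interior by the parallelogram rigidity) match the paper's.
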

\begin{proof} 
If there exists a thick isotopy $L(t)$ that carries $L$  to a split link, then there is a $t>0$ 
at which $\alpha(t)$ and $\beta(t)$ lie on opposite sides of a plane in $\R^3$.  
In particular, for this, $t$ we have $\alpha(t) \cap  C(\beta(t)) = \emptyset$. 
We will show to the contrary that $\alpha(t) \cap  C(\beta(t))$ always contains four points.
Let  $W \subset [0, \infty)$ be the set of times at which the five conditions of Lemma~\ref{4lengths} hold. 
They hold at  $t=0$ by the construction of $L = L(0)$. 
We will show that $W$ is a non-empty, open and closed subset of $[0, \infty)$, so that the five conditions hold for all $t \ge 0$.  

We first show that $W$ is open.  
Suppose that $t_1 \in W$. 
Since  $\ell (\alpha_i(t_1)) > \pi$ at time $t_1$, Lemma~\ref{4lengths} implies that $\ell (\alpha_i(t_1)) > 8.8$.
The intersection  $D(\beta(t)) \cap \alpha(t)$ contains four points $ \{ p_i(t_1) \}$ and  $\alpha(t)$  intersects $D(\beta(t))$ transversely at these points by  Lemma~\ref{transversality},
so there is a positive $\epsilon$ for which  $D(\beta(t)) \cap \alpha(t)$ continues to be transverse  for $t \in [t_1, t_1+\epsilon)$. Furthermore, in this interval the pairwise distance along $ \alpha(t)$, initially greater than 8.8, remains above $\pi$, and
by Lemma~\ref{shortarc} the pairwise distance between these points in $C(\beta(t))$ is at least two.

$K(t_1)$ is a convex 4-gon in $C(\beta(t)_1)$  and this condition continues to hold  for $t \in [t_1, t_1+\epsilon)$ and $\epsilon$ sufficiently small, as the interior angles of $K(t_1)$ are at least $\pi/3$ and these  vary continuously with $t$. $P(t_1)$ lies inside and at a distance of  at least $\sqrt{3}/2$ from  $K(t_1) $, so that $P(t)$ remains in the interior of $K(t)$ for small enough $\epsilon$.
Finally, the distance between  $\gamma_1(t_1)$ and $ \gamma_3(t_1)$  is at least $\sqrt{3}$, and the curves  $\gamma_1(t)$ and $ \gamma_3(t)$ vary continuously with $t$, so they remain disjoint for small $\epsilon$.  Then $\mbox{\rm lk}(\gamma_1(t),  \gamma_3(t))$ remains unchanged and non-zero for  $ t \in [t_1, t_1+\epsilon)$, as does  $\mbox{\rm lk}(\gamma_2(t),  \gamma_4(t))$.
Thus, all assumptions of  Lemma~\ref{4lengths} hold for  $ t \in [t_1, t_1+\epsilon)$.

We now show that $W$ is closed. Assume that $(t_0, t_1) \subset W$. 
 Then $\theta(t) = 2\pi$ and $\beta(t)$ is contained in a plane for each $t \in (t_0, t_1) $ and therefore also at $t=t_1$.
As $t \to t_1$ the points $\{ p_i(t) \}$ converge to $p_i(t_1)$ and have distance along $\alpha(t)$ that is greater than 8.8. Thus, the distance along $\alpha(t_1)$ between two of the points 
$\{p_i(t_1) \}$ is greater or equal to 8.8, and by Lemma~\ref{shortarc} their distance in $C(\beta(t_1))$ is at least two. Moreover, the interior angles of $K(t)$ are at least $\pi/3$ for $t \in (t_0, t_1) $ and these  vary continuously with $t$ so $K(t_1)$ remains a convex quadrilateral. $P(t)$ lies inside and at a distance of  at least $\sqrt{3}/2$ from  $K(t) $, so that $P(t_1)$ remains in the interior of $K(t_1)$. Thus $\gamma_{t_1}$ satisfies the four-point property with respect to  $\{ p_i(t) \}$.
Finally, the distance between  $\gamma_1(t)$ and $ \gamma_3(t)$  is at least $\sqrt{3}$ for $t \in (t_0, t_1) $, as is the distance between their
limiting curves $\gamma_1(t_1)$ and $ \gamma_3(t_1)$.  So the linking numbers remain constant on $ (t_0, t_1]$
and $\mbox{\rm lk}(\gamma_1(t_1),  \gamma_3(t_1) ) \ne 0 $ and  $\mbox{\rm lk}(\gamma_2(t_1),  \gamma_4(t_1)) \ne 0$.
Each of the five conditions in  in Lemma~\ref{4lengths} 
hold for $L(t_1)$ and $W$ is closed.

Since $W$ is open and closed, the five conditions in Lemma~\ref{4lengths} 
hold for $t \in (t_0, \infty) $. In particular, 
$D(\beta(t)) \cap \alpha(t)$ contains four points for all $t \ge 0$. If the two components of the link could be separated by a plane  at some finite time $t$ then we would have $D(\beta(t)) \cap \alpha(t) = \emptyset$.  We conclude that the link cannot be split by a thick isotopy, and thus that $L$ is a gordian unlink.
\end{proof}

An identical argument establishes the following result.
\begin{corollary}The unlinks $L(m,n)$, $n \ne 0,m \ne 0$, are gordian.
\end{corollary}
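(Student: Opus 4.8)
The plan is to observe that the proof of Theorem~\ref{gordian} never used the specific values $m=-1, n=1$ of the link $L = L(-1,1)$, only the two facts that $\ell(\beta(0)) = 8+4\pi$ with $\alpha(0)$ meeting $D(\beta(0))$ in the square $\{p_1,p_2,p_3,p_4\}$, and that $\mbox{\rm lk}(\gamma_1(0),\gamma_3(0)) \ne 0$ and $\mbox{\rm lk}(\gamma_2(0),\gamma_4(0)) \ne 0$. So the first step is to record that the construction of $L(m,n)$ described at the start of Section~3 produces, for every $m$ and $n$, a planar convex curve $\beta$ of length $8+4\pi$ together with a second unknotted component $\alpha$ meeting the bounding disk in the four unit-separated square vertices: the $m$ full twists between the middle strands and the $\pm n$ full twists between the leftmost strands are inserted in a small region near the top of the diagram and do not disturb the planar curve $\beta$ or the four intersection points with its spanning disk. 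Since the twists cancel in pairs, $L(m,n)$ is still the $2$-component unlink.

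Next I would verify that the five hypotheses of Lemma~\ref{4lengths} hold at $t=0$ for $L(m,n)$, exactly as they do for $L$: hypotheses (1)--(4) are geometric statements about the square configuration and are identical to the $L(-1,1)$ case, while hypothesis (5) now reads $\mbox{\rm lk}(\gamma_1(0),\gamma_3(0)) = m \ne 0$ and $\mbox{\rm lk}(\gamma_2(0),\gamma_4(0)) = n \ne 0$, which holds precisely because we assume $m \ne 0$ and $n \ne 0$. The arcs $\alpha_i(0)$ can be taken with $\ell(\alpha_i(0)) > \pi$ by construction (indeed the twisting only lengthens them).

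The remaining step is to run the open--closed argument of Theorem~\ref{gordian} verbatim with $L$ replaced by $L(m,n)$. Let $W \subset [0,\infty)$ be the set of times at which the five conditions of Lemma~\ref{4lengths} hold; it contains $0$. Openness of $W$ uses Lemma~\ref{transversality} and Lemma~\ref{shortarc} together with the quantitative bounds ($\ell(\alpha_i) > 8.8$, interior angles $\ge \pi/3$, $d(P,K) \ge \sqrt3/2$, $d(\gamma_1,\gamma_3) \ge \sqrt3$) produced by Lemma~\ref{4lengths}; closedness uses the same bounds to pass to limits and the continuity of linking numbers of disjoint curves. None of these estimates depends on the values of $m$ and $n$, only on the non-vanishing of the linking numbers, which is preserved along $W$. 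Hence $W = [0,\infty)$, so $D(\beta(t)) \cap \alpha(t)$ is nonempty for all $t$, and the two components of $L(m,n)$ can never be separated by a plane under a thick isotopy.

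There is no genuine new obstacle here: the work was already done in Lemma~\ref{4lengths} and Theorem~\ref{gordian}, which were stated so as to depend only on the non-triviality of the linking pairs. The only point requiring a little care is the first one — confirming that inserting $m$ and $\pm n$ canceling full twists in a localized part of the diagram genuinely leaves $\beta$ a length-$(8+4\pi)$ convex planar curve and $\alpha \cap D(\beta)$ the four square vertices — and this is immediate from the way the family $L(m,n)$ is built in Figure~\ref{Lmn}.
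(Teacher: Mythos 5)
Your proposal is correct and matches the paper, which disposes of this corollary with the single sentence ``An identical argument establishes the following result''; you have simply spelled out why the argument of Theorem~\ref{gordian} depends only on the non-vanishing of the two linking numbers and not on their particular values. No further comment is needed.
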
 \label{lkgordian}

\noindent
{\bf Final Remarks.}
\begin{enumerate}
\item The construction above can be used to generate $n$-component gordian unlinks for any $n>1$. To do this, we stack $(n-1)$ parallel copies of $\beta$, each having length $8+4\pi$.

\item An {\em ideal link} is a thick link which cannot be shortened within its thick isotopy class. By minimizing the length of the component $\alpha$ in the thick isotopy class of the link $L$, we can find an ideal configuration of the 2-component unlink which is not standard. We know from Cantarella, Kusner and Sullivan \cite{CKS} that this configuration is $C^{1,1}$, but we do not have an explicit description.

\item  With the length of $\beta$ equal to $8+ 4\pi$ the component $\beta$ remains in a plane during a thick isotopy, though
its shape may change among the quadrilaterals shown in Figure~\ref{fig:4discs}. The set of trivial 2-component thick links is closed, so a neighborhood of $L$ is not split. Thus, the link $L$ remains gordian if the length of $\beta$ is allowed to be somewhat larger, but $\beta$ is no longer constrained to be planar. We have not computed the maximal length for
$\beta$ in which $L$ remains gordian.
\end{enumerate}

\end{document}